\documentclass[twoside,reqno]{amsart}
\usepackage{amsfonts}
\usepackage{graphicx}
\usepackage{amscd}
\usepackage{cite}
\usepackage{hyperref}

\setcounter{MaxMatrixCols}{10}

\newtheorem{theorem}{Theorem}
\theoremstyle{plain}

\newtheorem{corollary}{Corollary}

\newtheorem{lemma}{Lemma}

\newtheorem{remark}{Remark}

\numberwithin{equation}{section}

\input{tcilatex}

\begin{document}
\title[]{Weak Convergence Theorem by a New Extragradient Method for Fixed
Point Problems and Variational Inequality Problems}
\author{\.{I}brahim Karahan}
\address{Department of Mathematics, Erzurum Technical University, Erzurum
25240, Turkey}
\email{ibrahimkarahan@erzurum.edu.tr}
\author{Murat \"{O}zdemir}
\address{Department of Mathematics, Ataturk University, Erzurum 25240, Turkey%
}
\email{mozdemir@atauni.edu.tr}
\subjclass[2000]{ 49J30, 47H09, 47J20}
\keywords{Variational inequalities, fixed point problems, weak convergence}

\begin{abstract}
We introduce a new extragradient iterative process, motivated and inspired \
by [S. H. Khan, A Picard-Mann Hybrid Iterative Process, Fixed Point Theory
and Applications, doi:10.1186/1687-1812-2013-69], for finding a common
element of the set of fixed points of a nonexpansive mapping and the set of
solutions of a variational inequality for an inverse strongly monotone
mapping in a Hilbert space. Using this process, we prove a weak convergence
theorem for the class of nonexpansive mappings in Hilbert spaces. Finally,
as an application, we give some theorems by using resolvent operator and
strictly pseudocontractive mapping.
\end{abstract}

\maketitle

\section{Introduction}

Let $H$ be a real Hilbert space with the inner product $\left\langle \cdot
,\cdot \right\rangle $ and the norm $\left\Vert \cdot \right\Vert $,
respectively. Let $C$ be a nonempty closed convex subset of $H,$ $I$ be the
idendity mapping on $C,$ and $P_{C}$ be the metric projection from $H$ onto $%
C.$

Recall that a mapping $T:C\rightarrow C$ is called nonexpansive if 
\begin{equation*}
\left\Vert Tx-Ty\right\Vert \leq \left\Vert x-y\right\Vert ,\text{ }\forall
x,y\in C.
\end{equation*}%
We denote by $F(T)$ the set of fixed points of $T$, i.e., $F\left( T\right)
=\left\{ x\in C:Tx=x\right\} $. For a mapping $A:C\rightarrow H,$ it is
called monotone if%
\begin{equation*}
\left\langle Ax-Ay,x-y\right\rangle \geq 0,
\end{equation*}%
$L$-Lipschitzian if there exists a constant $L>0$ such that 
\begin{equation*}
\left\Vert Ax-Ay\right\Vert \leq L\left\Vert x-y\right\Vert ,\text{ }\forall
x,y\in C;
\end{equation*}%
and $\alpha $-inverse strongly monotone if%
\begin{equation*}
\left\langle Ax-Ay,x-y\right\rangle \geq \alpha \left\Vert Ax-Ay\right\Vert
^{2},
\end{equation*}%
for all $x,y\in C.$

\begin{remark}
It is obvious that any $\alpha $-inverse strongly monotone mapping $A$ is
monotone and $\frac{1}{\alpha }$-Lipschitz continuous.
\end{remark}

Monotonicity conditions in the context of variational methods for nonlinear
operator equations were used by Vainberg and Kacurovskii \cite{vaka} and
then many authors have studied on this subject.

In this paper, we consider the following variational inequality problem $%
VI\left( C,A\right) $: find a $x\in C$ such that%
\begin{equation*}
\left\langle Ax,y-x\right\rangle \geq 0,\text{ \ }\forall y\in C.
\end{equation*}%
The set of solutions of $VI\left( C,A\right) $ is denoted by $\Omega ,$ i.e.,%
\begin{equation*}
\Omega =\left\{ x\in C:\left\langle Ax,y-x\right\rangle \geq 0\text{, }%
\forall y\in C\right\} .
\end{equation*}%
Let $S:C\rightarrow H$ be a mapping. In the context of the variational
inequality problem it is easy to check that%
\begin{equation*}
x\in \Omega \Leftrightarrow x\in F\left( P_{C}\left( I-\lambda S\right)
\right) ,\text{ }\forall \lambda >0.
\end{equation*}%
Variational inequalities were initially studied by Stampacchia \cite{stam1}, 
\cite{stam2}. Such a problem is connected with convex minimization problem,
the complementarity problem, the problem of finding point $x\in C$
satisfying $0\in A$ and etc. Fixed point problems are also closely related
to the variational inequality problems.

For finding an element of $F=F\left( T\right) \cap \Omega ,$ many authors
have studied widely under suitable assumptions (see \cite{sakasa, yalis,
caku, lita2, wosaya, yaliya}). For example, in 2006, Takahashi and Toyoda 
\cite{tato} introduced following iterative process:%
\begin{equation}
\left\{ 
\begin{array}{l}
x_{0}\in C \\ 
x_{n+1}=\alpha _{n}x_{n}+\left( 1-\alpha _{n}\right) TP_{C}\left( I-\lambda
_{n}A\right) x_{n},\text{ }\forall n\geq 0\text{,}%
\end{array}%
\right.   \label{A}
\end{equation}%
where $C$ is a nonempty closed convex subset of a real Hilbert space $H,$ $%
A:C\rightarrow H$ is an $\alpha $-inverse strongly monotone mapping, $%
P_{C}:H\rightarrow C$ is a metric projection, $T:C\rightarrow C$ is a
nonexpansive mapping, $\left\{ \alpha _{n}\right\} \subset \left[ a,b\right] 
$ for some $a,b\in \left( 0,1\right) ,$ and $\left\{ \lambda _{n}\right\}
\subset \left[ c,d\right] $ for some $c,d\in \left( 0,2\alpha \right) .$
They proved that if $F=F\left( T\right) \cap \Omega $ is nonempty, then the
sequence $\left\{ x_{n}\right\} $ generated by (\ref{A}) converges weakly to
some $z\in F$ where $z=\lim_{n\rightarrow \infty }P_{F}x_{n}.$ In the same
year, Nadezkhina and Takahashi \cite{nata} generalized the iterative process
(\ref{A}) and motivated by this process they introduced following iterative
scheme for nonexpansive mapping $S$ and monotone $k$-Lipschitzian mapping $A$%
. They proved the weak convergence of $\left\{ x_{n}\right\} $ under the
suitable conditions: 
\begin{equation}
\left\{ 
\begin{array}{l}
x_{0}\in C \\ 
x_{n+1}=\alpha _{n}x_{n}+\left( 1-\alpha _{n}\right) SP_{C}\left(
x_{n}-\lambda _{n}y_{n}\right)  \\ 
y_{n}=P_{C}\left( I-\lambda _{n}A\right) x_{n},\text{ }\forall n\geq 0.%
\end{array}%
\right.   \label{B}
\end{equation}

Recently, independetly from the above processes, Khan \cite{khan} and Sahu 
\cite{sahu}, individually, introduced the following iterative process which
Khan referred to as Picard-Mann hybrid iterative process:%
\begin{equation}
\left\{ 
\begin{array}{l}
x_{0}\in C \\ 
x_{n+1}=Ty_{n} \\ 
y_{n}=\alpha _{n}x_{n}+\left( 1-\alpha _{n}\right) Tx_{n},\text{ }\forall
n\geq 0,%
\end{array}%
\right.   \label{C}
\end{equation}%
where $\left\{ \alpha _{n}\right\} $ is a sequence in $\left( 0,1\right) .$
Khan proved a strong and a weak convergence theorems in Banach space for
iterative process (\ref{C}) under the suitable conditions where $T$ is a
nonexpansive mapping. Also he proved that the iterative process given by (%
\ref{C}) converges faster than the Picard, Mann and Ishikawa processes for
the contraction mappings.

In this paper, motivated and inspired by the idea of extragradient method
and the above processes, we introduce the following process:%
\begin{equation}
\left\{ 
\begin{array}{l}
x_{0}\in C \\ 
x_{n+1}=TP_{C}\left( I-\lambda _{n}A\right) y_{n} \\ 
y_{n}=\alpha _{n}x_{n}+\left( 1-\alpha _{n}\right) TP_{C}\left( I-\lambda
_{n}A\right) x_{n},\text{ }\forall n\geq 0,%
\end{array}%
\right.   \label{Z}
\end{equation}%
where $T$ is a nonexpansive mapping and $P_{C}$ is a metric projection from $%
H$ onto $C.$ Our iterative process is independent from all of the above
processes. Also, under the suitable conditions, we establish a weak
convergence theorem.

\section{Preliminaries}

In this section, we collect some useful lemmas that will be used for our
main result in the next section. We write $x_{n}\rightharpoonup x$ to
indicate that the sequence $\left\{ x_{n}\right\} $ converges weakly to $x,$
and $x_{n}\rightarrow x$ for the strong convergence. It is well known that
for any $x\in H,$ there exists a unique point $y_{0}\in C$ such that%
\begin{equation*}
\left\Vert x-y_{0}\right\Vert =\inf \left\{ \left\Vert x-y\right\Vert :y\in
C\right\} .
\end{equation*}%
We denote $y_{0}$ by $P_{C}x,$ where $P_{C}$ is called the metric projection
of $H$ onto $C.$ We know that $P_{C}$ is a nonexpansive mapping. It is also
known that $P_{C}$ has the following properties:

\begin{enumerate}
\item[(i)] $\left\Vert P_{C}x-P_{C}y\right\Vert \leq \left\Vert
x-y\right\Vert ,$ for all $x,y\in H,$

\item[(ii)] $\left\Vert x-y\right\Vert ^{2}\geq \left\Vert
x-P_{C}x\right\Vert ^{2}+\left\Vert y-P_{C}x\right\Vert ^{2},$ for all $x\in
H,$ $y\in C,$

\item[(iii)] $\left\langle x-P_{C}x,y-P_{C}x\right\rangle \leq 0,$ for all $%
x\in H,$ $y\in C,$
\end{enumerate}

It is known that a Hilbert space $H$ satisfies the Opial condition that, for
any sequence $\left\{ x_{n}\right\} $ with $x_{n}\rightharpoonup x,$ the
inequality%
\begin{equation*}
\liminf_{n\rightarrow \infty }\left\Vert x_{n}-x\right\Vert
<\liminf_{n\rightarrow \infty }\left\Vert x_{n}-y\right\Vert 
\end{equation*}%
holds for every $y\in H$ with $y\neq x.$

\begin{lemma}
\label{c}\cite{tato} Let $C$ be a nonempty closed convex subset of a real
Hilbert space $H$ and $\left\{ x_{n}\right\} $ be a sequence in $H.$ Suppose
that, for all $z\in C,$%
\begin{equation*}
\left\Vert x_{n+1}-z\right\Vert \leq \left\Vert x_{n}-z\right\Vert
\end{equation*}%
for every $n=0,1,2,\ldots .$ Then, $\left\{ P_{C}x_{n}\right\} $ converges
strongly to some $u\in C.$
\end{lemma}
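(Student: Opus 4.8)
The plan is to show that the sequence of projections $u_{n}:=P_{C}x_{n}$ is Cauchy in $H$; since $H$ is complete and $C$ is closed, this forces $\{u_{n}\}=\{P_{C}x_{n}\}$ to converge strongly to some $u\in C$. The whole argument hinges on combining the inequality (ii) with the monotonicity hypothesis, applied not to an arbitrary $z$ but to the projection points $u_{n}$ themselves.

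First I would record the behavior of the distances $d_{n}:=\left\Vert x_{n}-u_{n}\right\Vert =\operatorname{dist}(x_{n},C)$. Since $u_{n}\in C$, the hypothesis (with $z=u_{n}$) gives $\left\Vert x_{n+1}-u_{n}\right\Vert \leq \left\Vert x_{n}-u_{n}\right\Vert =d_{n}$, and because $u_{n+1}$ is the nearest point of $C$ to $x_{n+1}$ we have $d_{n+1}=\left\Vert x_{n+1}-u_{n+1}\right\Vert \leq \left\Vert x_{n+1}-u_{n}\right\Vert$. Hence $d_{n+1}\leq d_{n}$, so $\{d_{n}\}$ is nonincreasing and bounded below by $0$, and therefore converges to some limit $d\geq 0$.

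Next, fix $m>n$ and apply property (ii) with $x=x_{m}$ and $y=u_{n}\in C$; since $P_{C}x_{m}=u_{m}$ this yields
\[
\left\Vert x_{m}-u_{n}\right\Vert ^{2}\geq \left\Vert x_{m}-u_{m}\right\Vert ^{2}+\left\Vert u_{n}-u_{m}\right\Vert ^{2}=d_{m}^{2}+\left\Vert u_{n}-u_{m}\right\Vert ^{2}.
\]
Now I would control the left-hand side using the hypothesis: iterating $\left\Vert x_{k+1}-u_{n}\right\Vert \leq \left\Vert x_{k}-u_{n}\right\Vert$ from $k=n$ up to $k=m-1$ (all legitimate since $u_{n}\in C$) gives $\left\Vert x_{m}-u_{n}\right\Vert \leq \left\Vert x_{n}-u_{n}\right\Vert =d_{n}$. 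Substituting, we obtain the key estimate
\[
\left\Vert u_{n}-u_{m}\right\Vert ^{2}\leq d_{n}^{2}-d_{m}^{2}.
\]

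Finally, since $\{d_{n}\}$ converges it is Cauchy, so $d_{n}^{2}-d_{m}^{2}\rightarrow 0$ as $m,n\rightarrow \infty$; the estimate then shows $\{u_{n}\}$ is a Cauchy sequence, and its limit $u$ lies in the closed set $C$. I expect the only subtle point to be choosing the right vectors in (ii): taking $x=x_{m}$ and $y=u_{n}$ (rather than the reverse) is what makes the two distance bounds line up so that the telescoping difference $d_{n}^{2}-d_{m}^{2}$ appears; everything else is routine monotonicity bookkeeping.
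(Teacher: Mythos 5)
Your proof is correct. The paper itself gives no proof of this lemma (it is quoted directly from Takahashi and Toyoda \cite{tato}), and your argument --- applying the projection property (ii) with $x=x_{m}$, $y=u_{n}$ together with the monotonicity hypothesis at $z=u_{n}$ to obtain the Cauchy estimate $\left\Vert u_{n}-u_{m}\right\Vert ^{2}\leq d_{n}^{2}-d_{m}^{2}$ --- is essentially the same as the original Takahashi--Toyoda proof, so there is nothing to fix.
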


\begin{lemma}
\cite{tato} Let $C$ be a nonempty closed convex subset of a real Hilbert
space $H$ and let $A$ be an $\alpha $-inverse strongly monotone mapping of $%
C $ into $H.$ Then the solution of $VI\left( C,A\right) $, $\Omega ,$ is
nonempty.
\end{lemma}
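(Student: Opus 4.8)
The plan is to recast the existence of a solution of $VI\left( C,A\right) $ as a fixed point problem and then invoke a fixed point theorem for nonexpansive mappings. The excerpt already records the equivalence
\begin{equation*}
x\in \Omega \Leftrightarrow x\in F\left( P_{C}\left( I-\lambda A\right) \right) ,\text{ }\forall \lambda >0,
\end{equation*}
so it suffices to exhibit a single $\lambda >0$ for which the mapping $G_{\lambda }:=P_{C}\left( I-\lambda A\right) $ has a fixed point in $C$. Since $P_{C}$ sends $H$ into $C$, the map $G_{\lambda }$ is automatically a self-map of $C$; the whole burden is therefore to show it is nonexpansive for a suitable $\lambda $ and then to guarantee a fixed point.

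First I would verify that $I-\lambda A$ is nonexpansive on $C$ whenever $0<\lambda \leq 2\alpha $. For $x,y\in C$, expanding the square gives
\begin{equation*}
\left\Vert \left( I-\lambda A\right) x-\left( I-\lambda A\right) y\right\Vert ^{2}=\left\Vert x-y\right\Vert ^{2}-2\lambda \left\langle Ax-Ay,x-y\right\rangle +\lambda ^{2}\left\Vert Ax-Ay\right\Vert ^{2}.
\end{equation*}
Applying the $\alpha $-inverse strong monotonicity of $A$ to the middle term bounds this by $\left\Vert x-y\right\Vert ^{2}+\lambda \left( \lambda -2\alpha \right) \left\Vert Ax-Ay\right\Vert ^{2}$, which does not exceed $\left\Vert x-y\right\Vert ^{2}$ as soon as $\lambda \leq 2\alpha $. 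Composing with the nonexpansive projection $P_{C}$ (property (i)) then shows $G_{\lambda }$ is nonexpansive. Fixing any such $\lambda $ (for instance $\lambda =\alpha $) reduces the lemma to the existence of a fixed point of a nonexpansive self-map of $C$.

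Finally I would appeal to the classical fixed point theorem of Browder, G\"{o}hde and Kirk: every nonexpansive self-map of a bounded, closed, convex subset of a Hilbert space has a fixed point. Granting this, any fixed point $x^{\ast }$ of $G_{\lambda }$ lies in $C$ and, by the equivalence above, belongs to $\Omega $, so $\Omega \neq \emptyset $.

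The hard part here is not the computation but the hypotheses: the fixed point theorem needs $C$ to be bounded (equivalently, weakly compact), and without such an assumption the statement is simply false --- e.g. for the constant map $A\equiv v\neq 0$ on $C=H$, which is trivially $\alpha $-inverse strongly monotone, the inequality $\left\langle v,y-x\right\rangle \geq 0$ for all $y$ has no solution. I therefore read the lemma as carrying the standing assumption that $C$ is bounded, and the crux of a correct proof is to arrange the nonexpansive self-map $G_{\lambda }$ so that this fixed point theorem applies.
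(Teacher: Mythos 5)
The paper offers no proof of this lemma at all: it is quoted verbatim (with citation) from Takahashi and Toyoda \cite{tato}, so there is no internal argument to compare yours against. Judged on its own, your proposal is the standard argument and every step checks out: the identity
$\left\Vert \left( I-\lambda A\right) x-\left( I-\lambda A\right) y\right\Vert ^{2}=\left\Vert x-y\right\Vert ^{2}-2\lambda \left\langle Ax-Ay,x-y\right\rangle +\lambda ^{2}\left\Vert Ax-Ay\right\Vert ^{2}\leq \left\Vert x-y\right\Vert ^{2}+\lambda \left( \lambda -2\alpha \right) \left\Vert Ax-Ay\right\Vert ^{2}$
does show $I-\lambda A$ is nonexpansive for $0<\lambda \leq 2\alpha $, composing with the nonexpansive projection $P_{C}$ gives a nonexpansive self-map $G_{\lambda }$ of $C$, the Browder--G\"{o}hde--Kirk theorem supplies a fixed point when $C$ is bounded closed convex, and the fixed-point characterization $x\in \Omega \Leftrightarrow x\in F\left( P_{C}\left( I-\lambda A\right) \right) $ recorded in the paper converts that fixed point into a solution of $VI\left( C,A\right) $.

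Your closing observation is in fact the most valuable part: the lemma as printed in this paper is false without a boundedness (or coercivity) hypothesis, and your counterexample is airtight --- a constant map $A\equiv v$ satisfies $\left\langle Ax-Ay,x-y\right\rangle \geq \alpha \left\Vert Ax-Ay\right\Vert ^{2}$ trivially since both sides vanish, yet for $C=H$ and $v\neq 0$ the inequality $\left\langle v,y-x\right\rangle \geq 0$ fails at $y=x-v$. In the source being cited, the corresponding statement carries the assumption that $C$ is bounded; that hypothesis was evidently dropped in transcription here. So your proof is correct precisely under the reading you propose (bounded $C$), that reading is the only one under which the statement is true, and the boundedness enters exactly where you say it does: in the hypotheses of the fixed point theorem for nonexpansive self-maps.
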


For a set-valued mapping $S:H\rightarrow 2^{H}$, if the inequality%
\begin{equation*}
\left\langle f-g,u-v\right\rangle \geq 0
\end{equation*}%
holds for all $u,v\in C,f\in Su,g\in Sv,$ then $S$ is called monotone
mapping. A monotone mapping $S:H\rightarrow 2^{H}$ is maximal if the graph $%
G\left( S\right) $ of $S$ is not properly contained in the graph of any
other monotone mappings. It is known that a monotone mapping $S$ is maximal
if and only if, for $\left( u,f\right) \in H\times H,$ $\left\langle
u-v,f-w\right\rangle \geq 0$ for every $\left( v,w\right) \in G\left(
S\right) $ implies $f\in Su.$ Let $A$ be an inverse strongly monotone
mapping of $C$ into $H,$ let $N_{C}v$ be the normal cone to $C$ at $v\in C,$
i.e.,%
\begin{equation*}
N_{C}v=\left\{ w\in H:\left\langle v-u,w\right\rangle \geq 0,\forall u\in
C\right\} ,
\end{equation*}%
and define%
\begin{equation*}
Sv=\left\{ 
\begin{array}{cc}
Av+N_{C}v & v\in C \\ 
\emptyset & v\notin C.%
\end{array}%
\right.
\end{equation*}%
Then $S$ is maximal monotone and $0\in Sv$ if and only if $v\in \Omega .$

\begin{lemma}
\label{b}\cite{kirk} Let $C$ be a nonempty closed convex subset of a real
Hilbert space $H,$ and $T$ be a nonexpansive self-mapping of $C.$ If $%
F\left( T\right) \neq \emptyset ,$ then $I-T$ is demiclosed; that is
whenever $\left\{ x_{n}\right\} $ is a sequence in $C$ weakly converging to
some $x\in C$ and the sequence $\left\{ \left( I-T\right) x_{n}\right\} $
strongly converges to some $y$, it follows that $\left( I-T\right) x=y.$
Here $I$ is the identity operator of $H.$
\end{lemma}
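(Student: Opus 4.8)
The plan is to invoke the Opial condition stated just above the lemma, since it is the tool that converts the weak-limit hypothesis into a usable strict inequality between $\liminf$'s. So suppose $x_n \rightharpoonup x$ and $(I-T)x_n \to y$ strongly; the goal is to prove $(I-T)x = y$, equivalently $Tx = x-y$. I would argue by contradiction: assume $Tx \neq x-y$, so that the point $z := Tx + y$ is genuinely distinct from $x$, which is exactly what is needed to feed $z$ into Opial's inequality.

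Since $x_n \rightharpoonup x$ and $z \neq x$, the Opial condition yields $\liminf_n \Vert x_n - x\Vert < \liminf_n \Vert x_n - z\Vert$. The main computation is then to bound the right-hand side from above. Writing $x_n - z = \big((I-T)x_n - y\big) + (Tx_n - Tx)$, applying the triangle inequality, and using the nonexpansiveness of $T$ (so that $\Vert Tx_n - Tx\Vert \leq \Vert x_n - x\Vert$), one gets $\Vert x_n - z\Vert \leq \Vert (I-T)x_n - y\Vert + \Vert x_n - x\Vert$. Because $(I-T)x_n - y \to 0$, the first summand vanishes, so passing to the $\liminf$ collapses this to $\liminf_n \Vert x_n - z\Vert \leq \liminf_n \Vert x_n - x\Vert$. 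Combining with the Opial inequality gives $\liminf_n \Vert x_n - x\Vert < \liminf_n \Vert x_n - x\Vert$, an absurdity. Hence $Tx = x-y$, that is, $(I-T)x = y$.

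The one genuinely delicate point is the choice of comparison point $z = Tx + y$: it is engineered precisely so that the nonexpansive contraction of $Tx_n$ toward $Tx$ is measured against the weak pull of $x_n$ toward $x$, while the strongly convergent residual $(I-T)x_n - y$ drops out in the limit. Once that choice is made, the remaining estimates are routine. I would also note that the hypothesis $F(T) \neq \emptyset$ is not actually exploited by this argument — weak convergence already forces $\{x_n\}$ to be bounded, and $Tx$ is well defined because $T$ maps $C$ into itself and the weakly closed set $C$ contains $x$. An alternative route would expand $\Vert x_n - Tx\Vert^{2}$ via the inner product and invoke a monotonicity-type estimate, but the Opial argument is shorter and relies on exactly the facts assembled in this section.
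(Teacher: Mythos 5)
Your proof is correct. Note first that the paper itself offers no proof of this lemma: it is quoted verbatim with a citation to Goebel and Kirk, so there is nothing internal to compare against. Your argument is the standard Opial-based proof of the demiclosedness principle in Hilbert spaces, and it is self-contained given exactly the material the authors place in Section 2: the comparison point $z = Tx + y$ satisfies $z \neq x$ precisely when $(I-T)x \neq y$, the decomposition $x_n - z = \bigl((I-T)x_n - y\bigr) + (Tx_n - Tx)$ together with nonexpansiveness gives $\Vert x_n - z\Vert \leq \Vert (I-T)x_n - y\Vert + \Vert x_n - x\Vert$, and since the first term tends to $0$ one gets $\liminf_n \Vert x_n - z\Vert \leq \liminf_n \Vert x_n - x\Vert$, contradicting the strict Opial inequality. (The liminf step is legitimate: if $a_n \to 0$ then $\liminf_n (a_n + b_n) = \liminf_n b_n$.) Your side remark is also accurate: the hypothesis $F(T) \neq \emptyset$ plays no role in this argument, and indeed the demiclosedness principle holds without it; the hypothesis matters only for how the lemma is applied in Step 4 of Theorem \ref{11}, where the weak limit is then known to lie in $F(T) \cap \Omega$. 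One could quibble that you argue by contradiction where none is needed — the same inequalities show directly that $z \neq x$ is impossible — but that is cosmetic.
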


\begin{lemma}
\label{a}\cite{schu} Let $H$ be a real Hilbert space, let $\left\{ \alpha
_{n}\right\} $ be a sequence of real numbers such that $0<a\leq \alpha
_{n}\leq b<1$ for all $n=0,1,2,\ldots ,$ and let $\left\{ x_{n}\right\} $
and $\left\{ y_{n}\right\} $ be sequences of $H$ such that%
\begin{equation*}
\limsup_{n\rightarrow \infty }\left\Vert x_{n}\right\Vert \leq c,\text{ }%
\limsup_{n\rightarrow \infty }\left\Vert y_{n}\right\Vert \leq c\text{ and }%
\lim_{n\rightarrow \infty }\left\Vert \alpha _{n}x_{n}+\left( 1-\alpha
_{n}\right) y_{n}\right\Vert =c,
\end{equation*}%
\ for some $c>0.$ Then,%
\begin{equation*}
\lim_{n\rightarrow \infty }\left\Vert x_{n}-y_{n}\right\Vert =0.
\end{equation*}
\end{lemma}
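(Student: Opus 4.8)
The plan is to exploit the Hilbert space identity for convex combinations, namely
\begin{equation*}
\left\Vert \alpha x+\left( 1-\alpha \right) y\right\Vert ^{2}=\alpha
\left\Vert x\right\Vert ^{2}+\left( 1-\alpha \right) \left\Vert y\right\Vert
^{2}-\alpha \left( 1-\alpha \right) \left\Vert x-y\right\Vert ^{2},
\end{equation*}
valid for all $x,y\in H$ and $\alpha \in \left[ 0,1\right] $, which follows by expanding both sides through the inner product. Applying it with $x=x_{n}$, $y=y_{n}$, $\alpha =\alpha _{n}$ and rearranging isolates the quantity of interest:
\begin{equation*}
\alpha _{n}\left( 1-\alpha _{n}\right) \left\Vert x_{n}-y_{n}\right\Vert
^{2}=\alpha _{n}\left\Vert x_{n}\right\Vert ^{2}+\left( 1-\alpha _{n}\right)
\left\Vert y_{n}\right\Vert ^{2}-\left\Vert \alpha _{n}x_{n}+\left( 1-\alpha
_{n}\right) y_{n}\right\Vert ^{2}.
\end{equation*}
First I would control the coefficient on the left. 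Since $0<a\leq \alpha
_{n}\leq b<1$, both $\alpha _{n}\geq a>0$ and $1-\alpha _{n}\geq 1-b>0$, so that $\alpha _{n}\left( 1-\alpha _{n}\right) \geq a\left( 1-b\right) >0$ for every $n$. It therefore suffices to show that the right-hand side above tends to $0$.

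Next I would estimate that right-hand side by taking $\limsup $. Writing $M_{n}=\alpha _{n}\left\Vert x_{n}\right\Vert ^{2}+\left( 1-\alpha _{n}\right) \left\Vert y_{n}\right\Vert ^{2}$, I would observe that for any $\epsilon >0$ the hypotheses $\limsup \left\Vert x_{n}\right\Vert \leq c$ and $\limsup \left\Vert y_{n}\right\Vert \leq c$ furnish an index $N$ beyond which $\left\Vert x_{n}\right\Vert ^{2}<c^{2}+\epsilon $ and $\left\Vert y_{n}\right\Vert ^{2}<c^{2}+\epsilon $ hold simultaneously; then $M_{n}$, being a convex combination of these two quantities, also satisfies $M_{n}<c^{2}+\epsilon $. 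Letting $\epsilon \downarrow 0$ yields $\limsup M_{n}\leq c^{2}$. On the other hand, the hypothesis $\lim \left\Vert \alpha _{n}x_{n}+\left( 1-\alpha _{n}\right) y_{n}\right\Vert =c$ gives $\lim \left\Vert \alpha _{n}x_{n}+\left( 1-\alpha _{n}\right) y_{n}\right\Vert ^{2}=c^{2}$. Since subtracting a convergent sequence commutes with $\limsup $, the right-hand side obeys
\begin{equation*}
\limsup_{n\rightarrow \infty }\left[ M_{n}-\left\Vert \alpha _{n}x_{n}+\left(
1-\alpha _{n}\right) y_{n}\right\Vert ^{2}\right] \leq c^{2}-c^{2}=0.
\end{equation*}

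Finally, the bracketed expression equals $\alpha _{n}\left( 1-\alpha
_{n}\right) \left\Vert x_{n}-y_{n}\right\Vert ^{2}\geq 0$, so a nonnegative sequence with nonpositive $\limsup $ must converge to $0$. Dividing by the lower bound $a\left( 1-b\right) >0$ then gives $\left\Vert x_{n}-y_{n}\right\Vert ^{2}\rightarrow 0$, whence $\left\Vert x_{n}-y_{n}\right\Vert \rightarrow 0$, as required. The only delicate point is the $\limsup $ manipulation with the varying weights $\alpha _{n}$; handling it through the uniform $\epsilon $-bound on the convex combination $M_{n}$, rather than attempting to split the $\limsup $ of a product, is what keeps the argument clean.
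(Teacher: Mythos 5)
Your proof is correct, and every step checks out: the identity
\begin{equation*}
\left\Vert \alpha x+\left( 1-\alpha \right) y\right\Vert ^{2}=\alpha
\left\Vert x\right\Vert ^{2}+\left( 1-\alpha \right) \left\Vert y\right\Vert
^{2}-\alpha \left( 1-\alpha \right) \left\Vert x-y\right\Vert ^{2}
\end{equation*}
holds in any Hilbert space by direct expansion of inner products, the hypothesis $0<a\leq \alpha _{n}\leq b<1$ gives exactly the uniform lower bound $\alpha _{n}\left( 1-\alpha _{n}\right) \geq a\left( 1-b\right) >0$ you need, and your $\limsup$ bookkeeping is sound: the eventual bound $M_{n}<c^{2}+\epsilon $ for the convex combination, subtraction of the convergent sequence $\left\Vert \alpha _{n}x_{n}+\left( 1-\alpha _{n}\right) y_{n}\right\Vert ^{2}\rightarrow c^{2}$, and the observation that a nonnegative sequence with nonpositive $\limsup$ tends to zero. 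Be aware, however, that there is no proof in the paper to compare against: Lemma \ref{a} is imported from Schu \cite{schu} without proof. The meaningful comparison is with Schu's original argument, which establishes the lemma in the more general setting of a uniformly convex Banach space and therefore cannot rely on your identity; it proceeds instead through the modulus of convexity and a uniform-convexity estimate for convex combinations, which is necessarily less explicit. Your route is the standard Hilbert-space shortcut: it buys a short, elementary, self-contained proof that is exactly sufficient for this paper, since Theorem \ref{11} invokes the lemma only in a Hilbert space, at the cost of not covering the Banach-space generality of the result as cited from \cite{schu}.
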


\section{Main result}

In this section, we introduced a new extragradient method and proved that
the sequence generated by this iteration method converges weakly to a fixed
point of nonexpansive mapping and to a solution of variational inequality $%
VI(C,A)$.

\begin{theorem}
\label{11}Let $C$ be a nonempty closed convex subset of a real Hilbert space 
$H.$ Let $A:C\rightarrow H$ be an $\alpha $-inverse strongly monotone
mapping and $T:C\rightarrow C$ be a nonexpansive mapping such that $%
F=F\left( T\right) \cap \Omega \neq \emptyset .$ For arbitrary initial value 
$x_{0}\in H,$ let $\left\{ x_{n}\right\} $ be a sequence defined by (\ref{Z}%
) where $\{\lambda _{n}\}\subset \lbrack a,b]$ for some $a,b\in (0,2\alpha )$
and $\left\{ \alpha _{n}\right\} \subset \left[ c,d\right] $ for some $%
c,d\in \left( 0,1\right) $. Then, the sequence $\left\{ x_{n}\right\} $
converges weakly to a point $p\in F,$ where $p=\lim_{n\rightarrow \infty
}P_{F}x_{n}.$
\end{theorem}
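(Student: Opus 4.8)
The plan is to prove that $\{x_n\}$ is Fej\'er monotone with respect to $F$, to extract the asymptotic regularity of the scheme (\ref{Z}) from two layers of squared-norm estimates together with \lemref{a}, to show that every weak cluster point of $\{x_n\}$ lies in $F=F(T)\cap\Omega$, and finally to isolate a single weak limit by Opial's condition and identify it through \lemref{c}. For $\lambda_n\in[a,b]\subset(0,2\alpha)$ the operator $P_C(I-\lambda_n A)$ is nonexpansive, since the $\alpha$-inverse strong monotonicity of $A$ gives $\|(I-\lambda_n A)x-(I-\lambda_n A)y\|^2\le\|x-y\|^2+\lambda_n(\lambda_n-2\alpha)\|Ax-Ay\|^2$ with $\lambda_n(\lambda_n-2\alpha)\le-\delta<0$ on $[a,b]$. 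Writing $z_n=P_C(I-\lambda_n A)x_n$ and noting that each $p\in F$ satisfies $Tp=p$ and $P_C(I-\lambda_n A)p=p$, nonexpansiveness of $T$ and of $P_C(I-\lambda_n A)$ yields $\|Tz_n-p\|\le\|z_n-p\|\le\|x_n-p\|$, hence $\|y_n-p\|\le\|x_n-p\|$ and $\|x_{n+1}-p\|\le\|y_n-p\|\le\|x_n-p\|$. Therefore $\|x_n-p\|$ is nonincreasing, $\{x_n\}$ is bounded, $c:=\lim_n\|x_n-p\|$ exists for every $p\in F$, and $\lim_n\|y_n-p\|=c$ by squeezing.

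Passing to squared norms, the crude bound $\|z_n-p\|^2\le\|x_n-p\|^2+\lambda_n(\lambda_n-2\alpha)\|Ax_n-Ap\|^2$ propagates to $(1-\alpha_n)\lambda_n(2\alpha-\lambda_n)\|Ax_n-Ap\|^2\le\|x_n-p\|^2-\|x_{n+1}-p\|^2$; summing the telescoping right-hand side and using $(1-\alpha_n)\lambda_n(2\alpha-\lambda_n)\ge(1-d)\delta>0$ forces $\|Ax_n-Ap\|\to0$. Next, the firm-nonexpansiveness refinement of property (ii) gives $\|z_n-p\|^2\le\|x_n-p\|^2-\|x_n-z_n\|^2+2\lambda_n\langle x_n-z_n,Ax_n-Ap\rangle$; inserting this into $\|y_n-p\|^2\le\alpha_n\|x_n-p\|^2+(1-\alpha_n)\|z_n-p\|^2$ and using $\|x_{n+1}-p\|\le\|y_n-p\|$ yields $(1-\alpha_n)\|x_n-z_n\|^2\le\|x_n-p\|^2-\|x_{n+1}-p\|^2+2(1-\alpha_n)\lambda_n\langle x_n-z_n,Ax_n-Ap\rangle$, whose right-hand side tends to $0$; since $1-\alpha_n\ge1-d>0$, this gives $\|x_n-z_n\|\to0$. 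If $c=0$ then $x_n\to p$ strongly and the theorem is immediate, so assume $c>0$; applying \lemref{a} with $u_n=x_n-p$ and $v_n=Tz_n-p$ (so that $\alpha_nu_n+(1-\alpha_n)v_n=y_n-p$ has norm $\to c$, while $\limsup_n\|v_n\|\le c$) produces $\|x_n-Tz_n\|\to0$, and hence $\|z_n-Tz_n\|\le\|z_n-x_n\|+\|x_n-Tz_n\|\to0$.

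Now let $x_{n_i}\rightharpoonup v$ along some subsequence; then $z_{n_i}\rightharpoonup v$ because $\|x_n-z_n\|\to0$. Since $\|z_{n_i}-Tz_{n_i}\|\to0$, demiclosedness of $I-T$ (\lemref{b}) gives $Tv=v$, so $v\in F(T)$. For $v\in\Omega$ I would use the maximal monotone operator $S$ of the Preliminaries, for which $0\in Sv\Leftrightarrow v\in\Omega$: given $(u,w)\in G(S)$ with $w-Au\in N_Cu$, property (iii) applied to $z_n=P_C(I-\lambda_n A)x_n$ gives $\langle x_n-z_n-\lambda_n Ax_n,u-z_n\rangle\le0$, which, after invoking the monotonicity and the $\tfrac1\alpha$-Lipschitz continuity of $A$, rearranges to $\langle u-z_n,w\rangle\ge\langle u-z_n,Az_n-Ax_n\rangle+\tfrac1{\lambda_n}\langle x_n-z_n,u-z_n\rangle$. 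Letting $i\to\infty$, with $\|x_n-z_n\|\to0$, $\lambda_n\ge a$ and $\|Az_n-Ax_n\|\le\tfrac1\alpha\|z_n-x_n\|\to0$, yields $\langle u-v,w\rangle\ge0$; maximality of $S$ then gives $0\in Sv$, i.e.\ $v\in\Omega$. Thus every weak cluster point of $\{x_n\}$ belongs to $F$.

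To obtain a single weak limit, suppose two subsequences converge weakly to distinct $v_1,v_2\in F$; since $\lim_n\|x_n-v_1\|$ and $\lim_n\|x_n-v_2\|$ both exist, Opial's condition gives $\lim_n\|x_n-v_1\|<\lim_n\|x_n-v_2\|<\lim_n\|x_n-v_1\|$, a contradiction, so $x_n\rightharpoonup p$ for a single $p\in F$. Because $\|x_{n+1}-z\|\le\|x_n-z\|$ for all $z$ in the nonempty closed convex set $F$, \lemref{c} (with $F$ in place of $C$) gives $P_Fx_n\to u$ strongly for some $u\in F$; property (iii) of $P_F$ with $y=p$ gives $\langle x_n-P_Fx_n,p-P_Fx_n\rangle\le0$, and passing to the limit (weak times strong convergence) yields $\|p-u\|^2\le0$, so $p=u=\lim_nP_Fx_n$. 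The main obstacle I anticipate lies in the second paragraph: the two-step structure of (\ref{Z})—a convex combination producing $y_n$ followed by a further projection--$T$ step producing $x_{n+1}$—must be threaded through the squared-norm estimates so as to extract $\|Ax_n-Ap\|\to0$, $\|x_n-z_n\|\to0$ and $\|x_n-Tz_n\|\to0$ simultaneously, and keeping the cross terms $\langle x_n-z_n,Ax_n-Ap\rangle$ controlled while correctly combining the refinement of property (ii) with \lemref{a} is where slips are most likely.
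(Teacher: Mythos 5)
Your proposal is correct and follows essentially the same route as the paper's proof: Fej\'er monotonicity, the squared-norm estimate from inverse strong monotonicity to get $\|Ax_n-Ap\|\to 0$, the firm-nonexpansiveness refinement of property (ii) to get $\|x_n-z_n\|\to 0$, Lemma~\ref{a} for $\|x_n-Tz_n\|\to 0$, the maximal monotone operator $S=A+N_C$ together with demiclosedness (Lemma~\ref{b}) to place weak cluster points in $F$, and Opial plus Lemma~\ref{c} to identify the limit. The only differences are cosmetic --- you derive $\|x_n-z_n\|\to 0$ before invoking Lemma~\ref{a} rather than after, apply demiclosedness along $\{z_{n_i}\}$ instead of $\{x_{n_i}\}$, and (to your credit) explicitly dispose of the $c=0$ case that Lemma~\ref{a} excludes, a point the paper passes over silently.
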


\begin{proof}
We devide our proof into four steps.

\textbf{Step 1. }Let $t_{n}=P_{C}\left( I-\lambda _{n}A\right) x_{n}.$
First, we show that $\left\{ x_{n}\right\} $ and $\left\{ t_{n}\right\} $
are bounded sequences. Let $z\in F\left( T\right) \cap \Omega ,$ then, we
have that;%
\begin{eqnarray}
\left\Vert t_{n}-z\right\Vert ^{2} &=&\left\Vert P_{C}\left( I-\lambda
_{n}A\right) x_{n}-z\right\Vert ^{2}  \notag \\
&\leq &\left\Vert \left( I-\lambda _{n}A\right) x_{n}-\left( I-\lambda
_{n}A\right) z\right\Vert ^{2}  \notag \\
&=&\left\Vert x_{n}-z-\lambda _{n}\left( Ax_{n}-Az\right) \right\Vert ^{2} 
\notag \\
&\leq &\left\Vert x_{n}-z\right\Vert ^{2}-2\lambda _{n}\left\langle
x_{n}-z,Ax_{n}-Az\right\rangle +\lambda _{n}^{2}\left\Vert
Ax_{n}-Az\right\Vert ^{2}  \notag \\
&\leq &\left\Vert x_{n}-z\right\Vert ^{2}+\lambda _{n}\left( \lambda
_{n}-2\alpha \right) \left\Vert Ax_{n}-Az\right\Vert ^{2}  \notag \\
&\leq &\left\Vert x_{n}-z\right\Vert ^{2}  \label{1}
\end{eqnarray}%
and from (\ref{1}) we get%
\begin{eqnarray*}
\left\Vert x_{n+1}-z\right\Vert ^{2} &=&\left\Vert TP_{C}\left( I-\lambda
_{n}A\right) y_{n}-z\right\Vert ^{2} \\
&=&\left\Vert TP_{C}\left( I-\lambda _{n}A\right) y_{n}-TP_{C}\left(
I-\lambda _{n}A\right) z\right\Vert ^{2} \\
&\leq &\left\Vert y_{n}-z\right\Vert ^{2} \\
&=&\left\Vert \alpha _{n}\left( x_{n}-z\right) +\left( 1-\alpha _{n}\right)
\left( Tt_{n}-z\right) \right\Vert ^{2} \\
&\leq &\alpha _{n}\left\Vert x_{n}-z\right\Vert ^{2}+\left( 1-\alpha
_{n}\right) \left\Vert Tt_{n}-z\right\Vert ^{2} \\
&\leq &\alpha _{n}\left\Vert x_{n}-z\right\Vert ^{2}+\left( 1-\alpha
_{n}\right) \left\Vert t_{n}-z\right\Vert ^{2} \\
&\leq &\alpha _{n}\left\Vert x_{n}-z\right\Vert ^{2} \\
&&+\left( 1-\alpha _{n}\right) \left[ \left\Vert x_{n}-z\right\Vert
^{2}+\lambda _{n}\left( \lambda _{n}-2\alpha \right) \left\Vert
Ax_{n}-Az\right\Vert ^{2}\right] \\
&=&\left\Vert x_{n}-z\right\Vert ^{2}+\left( 1-\alpha _{n}\right) \lambda
_{n}\left( \lambda _{n}-2\alpha \right) \left\Vert Ax_{n}-Az\right\Vert ^{2}
\\
&\leq &\left\Vert x_{n}-z\right\Vert ^{2}+\left( 1-d\right) a\left(
b-2\alpha \right) \left\Vert Ax_{n}-Az\right\Vert ^{2} \\
&\leq &\left\Vert x_{n}-z\right\Vert ^{2}.
\end{eqnarray*}%
Therefore, there exists $\lim_{n\rightarrow \infty }\left\Vert
x_{n}-z\right\Vert $ and $Ax_{n}-Az\rightarrow 0.$ Hence $\left\{
x_{n}\right\} $ and $\left\{ t_{n}\right\} $ are bounded.

\textbf{Step 2.} We will show that $\lim_{n\rightarrow \infty }\left\Vert
x_{n}-y_{n}\right\Vert =0.$ Before that, we shall show $\lim_{n\rightarrow
\infty }\left\Vert Tt_{n}-x_{n}\right\Vert =0$. From Step 1, we know that $%
\lim_{n\rightarrow \infty }\left\Vert x_{n}-z\right\Vert $ exists for all $%
z\in F\left( T\right) \cap \Omega $.\ Let $\lim_{n\rightarrow \infty
}\left\Vert x_{n}-z\right\Vert =c.$\ Since%
\begin{equation*}
\left\Vert x_{n+1}-z\right\Vert \leq \left\Vert y_{n}-z\right\Vert \leq
\left\Vert x_{n}-z\right\Vert ,
\end{equation*}%
we get%
\begin{equation}
\lim_{n\rightarrow \infty }\left\Vert y_{n}-z\right\Vert =c.  \label{*1}
\end{equation}%
On the other hand, since%
\begin{equation*}
\left\Vert Tt_{n}-z\right\Vert \leq \left\Vert t_{n}-z\right\Vert \leq
\left\Vert x_{n}-z\right\Vert ,
\end{equation*}%
we have%
\begin{equation}
\limsup_{n\rightarrow \infty }\left\Vert Tt_{n}-z\right\Vert \leq c.
\label{*2}
\end{equation}%
Also, we know that%
\begin{equation}
\limsup_{n\rightarrow \infty }\left\Vert x_{n}-z\right\Vert \leq c
\label{*3}
\end{equation}%
and%
\begin{equation}
\lim_{n\rightarrow \infty }\left\Vert y_{n}-z\right\Vert =\lim_{n\rightarrow
\infty }\left\Vert \alpha _{n}\left( x_{n}-z\right) +\left( 1-\alpha
_{n}\right) \left( Tt_{n}-z\right) \right\Vert =c.  \label{*4}
\end{equation}%
Hence, from (\ref{*2}), (\ref{*3}), (\ref{*4}), and Lemma \ref{a} , we get
that%
\begin{equation}
\lim_{n\rightarrow \infty }\left\Vert x_{n}-Tt_{n}\right\Vert =0.  \label{*5}
\end{equation}%
We have also%
\begin{eqnarray*}
\left\Vert x_{n}-y_{n}\right\Vert &=&\left\Vert x_{n}-\alpha
_{n}x_{n}-\left( 1-\alpha _{n}\right) Tt_{n}\right\Vert \\
&=&\left( 1-\alpha _{n}\right) \left\Vert x_{n}-Tt_{n}\right\Vert .
\end{eqnarray*}%
So, from (\ref{*5}) we obtain that%
\begin{equation}
\lim_{n\rightarrow \infty }\left\Vert x_{n}-y_{n}\right\Vert =0.  \label{5.5}
\end{equation}%
Since $A$ is Lipschitz continuous, we have $Ax_{n}-Ay_{n}\rightarrow 0.$

\textbf{Step 3.} Next, we show that $\lim_{n\rightarrow \infty }\left\Vert
Tx_{n}-x_{n}\right\Vert =0.$ Using the properties of metric projections,
since%
\begin{eqnarray*}
\left\Vert t_{n}-z\right\Vert ^{2} &=&\left\Vert P_{C}\left( I-\lambda
_{n}A\right) x_{n}-P_{C}\left( I-\lambda _{n}A\right) z\right\Vert ^{2} \\
&\leq &\left\langle t_{n}-z,\left( I-\lambda _{n}A\right) x_{n}-\left(
I-\lambda _{n}A\right) z\right\rangle \\
&=&\frac{1}{2}\left[ \left\Vert t_{n}-z\right\Vert ^{2}+\left\Vert \left(
I-\lambda _{n}A\right) x_{n}-\left( I-\lambda _{n}A\right) z\right\Vert
^{2}\right. \\
&&\left. -\left\Vert t_{n}-z-\left[ \left( I-\lambda _{n}A\right)
x_{n}-\left( I-\lambda _{n}A\right) z\right] \right\Vert ^{2}\right] \\
&\leq &\frac{1}{2}\left[ \left\Vert t_{n}-z\right\Vert ^{2}+\left\Vert
x_{n}-z\right\Vert ^{2}-\left\Vert \left( t_{n}-x_{n}\right) +\lambda
_{n}\left( Ax_{n}-Az\right) \right\Vert ^{2}\right] \\
&=&\frac{1}{2}\left[ \left\Vert t_{n}-z\right\Vert ^{2}+\left\Vert
x_{n}-z\right\Vert ^{2}-\left\Vert t_{n}-x_{n}\right\Vert ^{2}\right. \\
&&\left. -2\lambda _{n}\left\langle t_{n}-x_{n},Ax_{n}-Az\right\rangle
-\lambda _{n}^{2}\left\Vert Ax_{n}-Az\right\Vert ^{2}\right] ,
\end{eqnarray*}%
it follows that%
\begin{eqnarray}
\left\Vert t_{n}-z\right\Vert ^{2} &\leq &\left\Vert x_{n}-z\right\Vert
^{2}-\left\Vert t_{n}-x_{n}\right\Vert ^{2}  \notag \\
&&-2\lambda _{n}\left\langle t_{n}-x_{n},Ax_{n}-Az\right\rangle -\lambda
_{n}^{2}\left\Vert Ax_{n}-Az\right\Vert ^{2}.  \label{8}
\end{eqnarray}%
So, using the inequality (\ref{8})\ we get%
\begin{eqnarray*}
\left\Vert x_{n+1}-z\right\Vert ^{2} &=&\left\Vert TP_{C}\left( I-\lambda
_{n}A\right) y_{n}-z\right\Vert ^{2} \\
&=&\left\Vert TP_{C}\left( I-\lambda _{n}A\right) y_{n}-TP_{C}\left(
I-\lambda _{n}A\right) z\right\Vert ^{2} \\
&\leq &\left\Vert y_{n}-z\right\Vert ^{2} \\
&=&\left\Vert \alpha _{n}\left( x_{n}-z\right) +\left( 1-\alpha _{n}\right)
\left( Tt_{n}-z\right) \right\Vert ^{2} \\
&\leq &\alpha _{n}\left\Vert x_{n}-z\right\Vert ^{2}+\left( 1-\alpha
_{n}\right) \left\Vert Tt_{n}-z\right\Vert ^{2} \\
&\leq &\alpha _{n}\left\Vert x_{n}-z\right\Vert ^{2}+\left( 1-\alpha
_{n}\right) \left\Vert t_{n}-z\right\Vert ^{2} \\
&\leq &\left\Vert x_{n}-z\right\Vert ^{2}-\left( 1-\alpha _{n}\right)
\left\Vert t_{n}-x_{n}\right\Vert ^{2} \\
&&-2\lambda _{n}\left( 1-\alpha _{n}\right) \left\langle
t_{n}-x_{n},Ax_{n}-Az\right\rangle \\
&&-\lambda _{n}^{2}\left( 1-\alpha _{n}\right) \left\Vert
Ax_{n}-Az\right\Vert ^{2} \\
&\leq &\left\Vert x_{n}-z\right\Vert ^{2}-\left( 1-d\right) \left\Vert
t_{n}-x_{n}\right\Vert ^{2} \\
&&-2\lambda _{n}\left( 1-\alpha _{n}\right) \left\langle
t_{n}-x_{n},Ax_{n}-Az\right\rangle \\
&&-\lambda _{n}^{2}\left( 1-\alpha _{n}\right) \left\Vert
Ax_{n}-Az\right\Vert ^{2}.
\end{eqnarray*}%
Since $\lim_{n\rightarrow \infty }\left\Vert x_{n+1}-z\right\Vert
=\lim_{n\rightarrow \infty }\left\Vert x_{n}-z\right\Vert $ and $%
Ax_{n}-Az\rightarrow 0,$\ we obtain%
\begin{equation}
\lim_{n\rightarrow \infty }\left\Vert x_{n}-t_{n}\right\Vert =0.  \label{*6}
\end{equation}%
On the other hand, we have%
\begin{eqnarray*}
\left\Vert Tx_{n}-x_{n}\right\Vert &\leq &\left\Vert
Tx_{n}-Tt_{n}\right\Vert +\left\Vert Tt_{n}-x_{n}\right\Vert \\
&\leq &\left\Vert x_{n}-t_{n}\right\Vert +\left\Vert Tt_{n}-x_{n}\right\Vert
.
\end{eqnarray*}%
So, it follows from (\ref{*5}) and (\ref{*6}) that%
\begin{equation}
\lim_{n\rightarrow \infty }\left\Vert Tx_{n}-x_{n}\right\Vert =0.  \label{*7}
\end{equation}

\textbf{Step 4. }Finally, we show that $\left\{ x_{n}\right\} $ converges
weakly to a $p\in F.$ Since $\left\{ x_{n}\right\} $ is a bounded sequence,
there is a subsequence $\left\{ x_{n_{i}}\right\} $ of $\left\{
x_{n}\right\} $ converges weakly to $p.$ We need to show that $p$ belongs to 
$F.$ First, we show that $p\in \Omega .$ From (\ref{*6}), we have $%
t_{n_{i}}\rightharpoonup p.$ Let%
\begin{equation*}
Sv=\left\{ 
\begin{array}{ll}
Av+N_{C}v & ,\text{ }v\in C, \\ 
\emptyset & ,\text{ }v\notin C.%
\end{array}%
\right.
\end{equation*}%
Then $S$ is maximal monotone mapping. Let $\left( v,w\right) \in G\left(
S\right) .$ Since $w-Av\in N_{C}v$ and $t_{n}\in C,$ we get%
\begin{equation}
\left\langle v-t_{n},w-Av\right\rangle \geq 0.  \label{10}
\end{equation}%
On the other hand, from the definiton of $t_{n},$ we have that%
\begin{equation*}
\left\langle x_{n}-\lambda _{n}Ax_{n}-t_{n},t_{n}-v\right\rangle \geq 0
\end{equation*}%
and hence,%
\begin{equation*}
\left\langle v-t_{n},\frac{t_{n}-x_{n}}{\lambda _{n}}+Ax_{n}\right\rangle
\geq 0.
\end{equation*}%
Therefore, using (\ref{10}), we get%
\begin{eqnarray*}
\left\langle v-t_{n_{i}},w\right\rangle &\geq &\left\langle
v-t_{n_{i}},Av\right\rangle \\
&\geq &\left\langle v-t_{n_{i}},Av\right\rangle -\left\langle v-t_{n_{i}},%
\frac{t_{n_{i}}-x_{n_{i}}}{\lambda _{n_{i}}}+Ax_{n_{i}}\right\rangle \\
&=&\left\langle v-t_{n_{i}},Av-Ax_{n_{i}}-\frac{t_{n_{i}}-x_{n_{i}}}{\lambda
_{n_{i}}}\right\rangle \\
&=&\left\langle v-t_{n_{i}},Av-At_{n_{i}}\right\rangle -\left\langle
v-t_{n_{i}},At_{n_{i}}-Ax_{n_{i}}\right\rangle \\
&&-\left\langle v-t_{n_{i}},\frac{t_{n_{i}}-x_{n_{i}}}{\lambda _{n_{i}}}%
\right\rangle \\
&\geq &\left\langle v-t_{n_{i}},At_{n_{i}}-Ax_{n_{i}}\right\rangle
-\left\langle v-t_{n_{i}},\frac{t_{n_{i}}-x_{n_{i}}}{\lambda _{n_{i}}}%
\right\rangle .
\end{eqnarray*}%
Hence, for $i\rightarrow \infty $ we have%
\begin{equation*}
\left\langle v-p,w\right\rangle \geq 0.
\end{equation*}%
Since $S$ is maximal monotone, we have $p\in S^{-1}0$ and hence $p\in \Omega
.$ Next, we show that $p\in F\left( T\right) .$ From (\ref{*7}), Lemma \ref%
{b} and by using $x_{n_{i}}\rightharpoonup p$, we have that $p\in F\left(
T\right) .$ So desired conclusion $\left( p\in F\right) $ is obtained.

Now it remains to show that $\left\{ x_{n}\right\} $ converges weakly to $%
p\in F$ and $p=\lim_{n\rightarrow \infty }P_{F}x_{n}.$ Let assume that there
is an another subsequence $\left\{ x_{n_{j}}\right\} $ of $\left\{
x_{n}\right\} $ and x$_{n_{j}}\rightharpoonup p_{0}\in F.$ We shall show
that $p=p_{0}.$ Conversely, let suppose that $p\neq p_{0}$. By using Opial
condition, we obtain that%
\begin{eqnarray*}
\lim_{n\rightarrow \infty }\left\Vert x_{n}-p\right\Vert
&=&\liminf_{i\rightarrow \infty }\left\Vert x_{n_{i}}-p\right\Vert \\
&<&\liminf_{i\rightarrow \infty }\left\Vert x_{n_{i}}-p_{0}\right\Vert \\
&=&\lim_{n\rightarrow \infty }\left\Vert x_{n}-p_{0}\right\Vert \\
&=&\liminf_{j\rightarrow \infty }\left\Vert x_{n_{j}}-p_{0}\right\Vert \\
&<&\liminf_{j\rightarrow \infty }\left\Vert x_{n_{j}}-p\right\Vert \\
&=&\lim_{n\rightarrow \infty }\left\Vert x_{n}-p\right\Vert .
\end{eqnarray*}%
This is a contradiction, so we get $p=p_{0}.$ This implies that $%
x_{n}\rightharpoonup p\in F.$

Finally, we need to show $p=\lim_{n\rightarrow \infty }P_{F}x_{n}.$ Since $%
p\in F,$ we have%
\begin{equation*}
\left\langle p-P_{F}x_{n},P_{F}x_{n}-x_{n}\right\rangle \geq 0.
\end{equation*}%
By Lemma \ref{c}, $\left\{ P_{F}x_{n}\right\} $ converges strongly to $%
u_{0}\in F.$ Then, we get%
\begin{equation*}
\left\langle p-u_{0},u_{0}-p\right\rangle \geq 0,
\end{equation*}%
and hence $p=u_{0}.$ So, proof is completed.
\end{proof}

\begin{corollary}
Let $C$ be a nonempty closed convex subset of a real Hilbert space $H.$ Let $%
A:C\rightarrow H$ be an $\alpha $-inverse strongly monotone mapping such
that $\Omega \neq \emptyset .$ For arbitrary initial value $x_{0}\in H,$ let 
$\left\{ x_{n}\right\} $ be a sequence defined by%
\begin{equation*}
\left\{ 
\begin{array}{l}
x_{n+1}=P_{C}\left( I-\lambda _{n}A\right) y_{n} \\ 
y_{n}=\alpha _{n}x_{n}+\left( 1-\alpha _{n}\right) P_{C}\left( I-\lambda
_{n}A\right) x_{n},\forall n\geq 0,%
\end{array}%
\right.
\end{equation*}%
where $\{\lambda _{n}\}\subset \lbrack a,b]$ for some $a,b\in (0,2\alpha )$
and $\left\{ \alpha _{n}\right\} \subset \left[ c,d\right] $ for some $%
c,d\in \left( 0,1\right) $. Then, the sequence $\left\{ x_{n}\right\} $
converges weakly to a point $p\in \Omega $ where $p=\lim_{n\rightarrow
\infty }P_{\Omega }x_{n}.$
\end{corollary}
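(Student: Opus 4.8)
The plan is to deduce this corollary as the special case of Theorem~\ref{11} obtained by taking $T$ to be the identity mapping $I$ on $C$. First I would verify that $I$ satisfies the hypotheses imposed on the nonexpansive map in the theorem: since $\left\Vert Ix-Iy\right\Vert =\left\Vert x-y\right\Vert \leq \left\Vert x-y\right\Vert$ for all $x,y\in C$, the identity is nonexpansive, and its fixed point set is $F\left( I\right) =\left\{ x\in C:x=x\right\} =C$. Consequently the common solution set appearing in the theorem reduces to $F=F\left( I\right) \cap \Omega =C\cap \Omega =\Omega$, where the last equality uses $\Omega \subseteq C$. The corollary's standing assumption $\Omega \neq \emptyset$ therefore guarantees $F\neq \emptyset$, which is precisely the nonemptiness hypothesis required to invoke Theorem~\ref{11}.

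Next I would check that substituting $T=I$ into the iterative scheme (\ref{Z}) collapses it to the scheme stated in the corollary. Indeed, $TP_{C}\left( I-\lambda _{n}A\right) y_{n}=P_{C}\left( I-\lambda _{n}A\right) y_{n}$ and $TP_{C}\left( I-\lambda _{n}A\right) x_{n}=P_{C}\left( I-\lambda _{n}A\right) x_{n}$, so (\ref{Z}) becomes exactly
\begin{equation*}
\left\{
\begin{array}{l}
x_{n+1}=P_{C}\left( I-\lambda _{n}A\right) y_{n} \\
y_{n}=\alpha _{n}x_{n}+\left( 1-\alpha _{n}\right) P_{C}\left( I-\lambda _{n}A\right) x_{n}.
\end{array}
\right.
\end{equation*}
The parameter restrictions $\{\lambda _{n}\}\subset \lbrack a,b]$ with $a,b\in (0,2\alpha )$ and $\{\alpha _{n}\}\subset \lbrack c,d]$ with $c,d\in (0,1)$ are identical in both statements, so all the quantitative conditions of the theorem are met.

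With these identifications in place, Theorem~\ref{11} applies verbatim and yields a point $p\in F$ to which $\left\{ x_{n}\right\} $ converges weakly, with $p=\lim_{n\rightarrow \infty }P_{F}x_{n}$. Translating back through $F=\Omega $ (and hence $P_{F}=P_{\Omega }$) gives $x_{n}\rightharpoonup p\in \Omega $ and $p=\lim_{n\rightarrow \infty }P_{\Omega }x_{n}$, which is the desired conclusion. Since the result is a direct specialization, there is no genuine obstacle; the only point requiring a moment's care is confirming that the identity map legitimately meets every hypothesis of the theorem, and in particular that $F\left( I\right) \cap \Omega $ does collapse to $\Omega $ via $\Omega \subseteq C$, so that no part of the theorem's machinery is being applied outside its stated scope.
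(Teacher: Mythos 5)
Your proof is correct and matches the paper's intended argument: the corollary is stated immediately after Theorem~\ref{11} with no separate proof precisely because it is the specialization $T=I$, which is exactly what you carry out. Your verification that $F(I)\cap\Omega=\Omega$ (via $\Omega\subseteq C$) and that the scheme (\ref{Z}) collapses to the corollary's iteration is the whole content of the deduction.
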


\section{Applications}

Let $B:H\rightarrow 2^{H}$ be a maximal monotone mapping. The resolvent of $%
B $ of order $r>0$ is the single valued mapping $J_{r}^{B}:H\rightarrow H$
defined by%
\begin{equation*}
J_{r}^{B}x=\left( I+rB\right) ^{-1}x
\end{equation*}%
for any $x\in H$. It is easy to check that $F\left( J_{r}^{B}\right)
=B^{-1}0 $. Moreover, the resolvent $J_{r}^{B}$ is a nonexpansive mapping.
So, we can give the following theorem.

\begin{theorem}
Let $H$ be a real Hilbert space$.$ Let $\alpha >0$, $A:H\rightarrow H$ be an 
$\alpha $-inverse strongly monotone mapping and $B:H\rightarrow 2^{H}$ be a
maximal monotone mapping such that $A^{-1}0\cap B^{-1}0\neq \emptyset .$ For
arbitrary initial value $x_{0}\in H,$ let $\left\{ x_{n}\right\} $ be a
sequence defined by%
\begin{equation*}
\left\{ 
\begin{array}{l}
x_{n+1}=J_{r}^{B}\left( y_{n}-\lambda _{n}Ay_{n}\right) \\ 
y_{n}=\alpha _{n}x_{n}+\left( 1-\alpha _{n}\right) J_{r}^{B}\left(
x_{n}-\lambda _{n}Ax_{n}\right) ,\forall n\geq 0,%
\end{array}%
\right.
\end{equation*}%
where $\{\lambda _{n}\}\subset \lbrack a,b]$ for some $a,b\in (0,2\alpha )$
and $\left\{ \alpha _{n}\right\} \subset \left[ c,d\right] $ for some $%
c,d\in \left( 0,1\right) $. Then the sequence $\left\{ x_{n}\right\} $
converges weakly to a point $p\in A^{-1}0\cap B^{-1}0$ where $%
p=\lim_{n\rightarrow \infty }P_{A^{-1}0\cap B^{-1}0}x_{n}.$
\end{theorem}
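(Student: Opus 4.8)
The plan is to derive this theorem as a direct application of \thmref{11} by specializing the ambient convex set to $C = H$ and the nonexpansive mapping to the resolvent $T = J_r^B$. First I would observe that when $C = H$, the metric projection $P_C = P_H$ is the identity operator, so the iterative process (\ref{Z}) collapses term by term: the update $x_{n+1} = TP_C(I-\lambda_n A)y_n$ becomes $x_{n+1} = J_r^B(y_n - \lambda_n A y_n)$, and similarly $y_n = \alpha_n x_n + (1-\alpha_n)J_r^B(x_n - \lambda_n A x_n)$. This matches the scheme in the statement exactly, so no new convergence argument is needed---the heavy lifting has already been done in \thmref{11}.

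The substantive step is to identify the two fixed-point sets correctly. As recorded just before the statement, $J_r^B$ is nonexpansive and $F(J_r^B) = B^{-1}0$, which handles the $F(T)$ factor. For the variational-inequality factor, I would show that with $C = H$ the solution set reduces to $\Omega = A^{-1}0$. Indeed, if $x \in \Omega$ then $\langle Ax, y - x\rangle \geq 0$ for all $y \in H$; choosing $y = x - Ax$ gives $-\|Ax\|^2 \geq 0$, forcing $Ax = 0$. The converse is immediate since $Ax = 0$ makes every inner product vanish. Hence $F = F(T)\cap\Omega = B^{-1}0 \cap A^{-1}0 = A^{-1}0 \cap B^{-1}0$, which is nonempty by hypothesis.

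With these identifications in place, the remaining hypotheses of \thmref{11} are satisfied verbatim: $A$ is $\alpha$-inverse strongly monotone on $H$, the step sizes satisfy $\{\lambda_n\}\subset[a,b]\subset(0,2\alpha)$ and $\{\alpha_n\}\subset[c,d]\subset(0,1)$, and $F \neq \emptyset$. Applying \thmref{11} then yields weak convergence of $\{x_n\}$ to some $p \in F = A^{-1}0 \cap B^{-1}0$ with $p = \lim_{n\to\infty} P_F x_n = \lim_{n\to\infty} P_{A^{-1}0\cap B^{-1}0} x_n$, which is precisely the asserted conclusion. I do not expect any genuine obstacle here; the only point requiring even a line of verification is the identity $\Omega = A^{-1}0$, and that follows from the one-line test-vector argument above. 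The proof is therefore a clean reduction rather than a fresh analysis.
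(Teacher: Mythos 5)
Your proposal is correct and follows essentially the same route as the paper's own proof, which likewise reduces the theorem to Theorem \ref{11} by noting $P_H = I$, $F\left(J_r^B\right) = B^{-1}0$, $A^{-1}0 = VI\left(H,A\right)$, and the nonexpansivity of the resolvent. The only difference is that you actually verify the identity $\Omega = A^{-1}0$ with the test vector $y = x - Ax$, a detail the paper asserts without proof.
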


\begin{proof}
We have $A^{-1}0=VI\left( H,A\right) $, $F\left( J_{r}^{B}\right) =B^{-1}0$
and $P_{H}=I$. Since the resolvent $J_{r}^{B}$ is a nonexpansive mapping, we
obtain the desired conclusion.
\end{proof}

Now, we give a theorem for a pair of nonexpansive mapping and strictly
pseudocontractive mapping. A mapping $S:C\rightarrow C$ is called $k$-
strictly pseudocontractive mapping if there exists $k$ with $0\leq k<1$ such
that%
\begin{equation*}
\left\Vert Sx-Sy\right\Vert ^{2}\leq \left\Vert x-y\right\Vert
^{2}+k\left\Vert \left( I-S\right) x-\left( I-S\right) y\right\Vert ^{2}
\end{equation*}%
for all $x,y\in C.$ Let $A=I-S.$ Then, it is known that the mapping $A$ is
inverse strongly monotone mapping with $\left( 1-k\right) /2$, i.e.,%
\begin{equation*}
\left\langle Ax-Ay,x-y\right\rangle \geq \frac{1-k}{2}\left\Vert
Ax-Ay\right\Vert ^{2}.
\end{equation*}

\begin{theorem}
Let $C$ be a nonempty closed convex subset of a real Hilbert space $H.$ Let $%
T:C\rightarrow C$ be a nonexpansive mapping and $S:C\rightarrow C$ be a $k$-
strictly pseudocontractive mapping such that $F\left( T\right) \cap F\left(
S\right) \neq \emptyset .$ For arbitrary initial value $x_{0}\in H,$ let $%
\left\{ x_{n}\right\} $ be a sequence defined by%
\begin{equation*}
\left\{ 
\begin{array}{l}
x_{n+1}=T\left( \left( I-\lambda _{n}\right) y_{n}+\lambda _{n}Sy_{n}\right)
\\ 
y_{n}=\alpha _{n}x_{n}+\left( 1-\alpha _{n}\right) T\left( \left( I-\lambda
_{n}\right) x_{n}+\lambda _{n}Sx_{n}\right) ,\forall n\geq 0,%
\end{array}%
\right.
\end{equation*}%
where $\{\lambda _{n}\}\subset \lbrack a,b]$ for some $a,b\in (0,1-k)$ and $%
\left\{ \alpha _{n}\right\} \subset \left[ c,d\right] $ for some $c,d\in
\left( 0,1\right) $. Then the sequence $\left\{ x_{n}\right\} $ converges
weakly to a point $p\in F\left( T\right) \cap F\left( S\right) $ where $%
p=\lim_{n\rightarrow \infty }P_{F\left( T\right) \cap F\left( S\right)
}x_{n}.$
\end{theorem}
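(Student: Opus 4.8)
The final theorem concerns a $k$-strictly pseudocontractive mapping $S$ and a nonexpansive mapping $T$. The iterative scheme uses the operator $(I-\lambda_n)x + \lambda_n Sx$ inside the projection-free iteration. The key observation, provided right before the theorem statement, is that setting $A = I - S$ yields an $\alpha$-inverse strongly monotone mapping with $\alpha = (1-k)/2$.

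**My plan is to reduce this theorem to Theorem \ref{11} via the substitution $A = I - S$.**

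Let me verify the algebraic reduction works. With $A = I - S$, I compute $(I - \lambda_n A)x = x - \lambda_n(I-S)x = (1-\lambda_n)x + \lambda_n Sx$, which is exactly the operator appearing inside both lines of the iteration. So the given scheme is precisely the scheme (\ref{Z}) of Theorem \ref{11} with the domain being all of $C$ and the projection $P_C$ replaced by the identity (since we are working with $C$ as the whole space in the relevant sense, or more precisely since $S:C\to C$ keeps iterates in $C$). I would need to address whether $P_C$ can be dropped; here $S:C\to C$ and $T:C\to C$, and $(1-\lambda_n)x+\lambda_n Sx$ is a convex combination of points in $C$, hence lies in $C$ by convexity, so $P_C$ acts as the identity on these points and the schemes coincide.

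**Next I would match the hypotheses and the solution sets.** Since $A = I - S$ is $\alpha$-inverse strongly monotone with $\alpha = (1-k)/2$, the condition $\{\lambda_n\} \subset [a,b]$ with $a,b \in (0, 1-k)$ becomes exactly $a,b \in (0, 2\alpha)$, matching the hypothesis of Theorem \ref{11}. The fixed point set condition requires identifying $F(S)$ and $\Omega$. I claim $F(S) = A^{-1}0 = \Omega$: indeed $Sx = x \iff Ax = 0$, and for the variational inequality $\Omega = F(P_C(I-\lambda A))$, which (with $P_C$ the identity on the relevant range) reduces to the fixed points of $I - \lambda A$, i.e.\ to $A^{-1}0$. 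Thus $F(T) \cap F(S) = F(T) \cap \Omega$, and the nonemptiness hypothesis transfers directly.

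**The main obstacle, which I expect to be minor but worth checking, is the interplay between $S$ being defined on $C$ and the inverse-strong-monotonicity being stated.** The inequality $\langle Ax - Ay, x - y\rangle \geq \frac{1-k}{2}\|Ax - Ay\|^2$ is asserted for the $k$-strictly pseudocontractive $S$; I would want to confirm this holds for all $x,y \in C$ (it follows by expanding the strict pseudocontraction inequality, using $\|x-y\|^2 = \|Ax - Ay\|^2 + 2\langle x - y, Sx - Sy\rangle + \ldots$ type manipulations, but since it is stated as known in the excerpt I may cite it). Once these identifications are in place, the conclusion of Theorem \ref{11} applies verbatim, giving weak convergence of $\{x_n\}$ to some $p \in F(T) \cap F(S)$ with $p = \lim_{n\to\infty} P_{F(T)\cap F(S)} x_n$. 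Concretely, I would write: \emph{Set $A = I - S$. Then $A$ is $(1-k)/2$-inverse strongly monotone, $F(S) = \Omega$, and $(I - \lambda_n A)x = (1-\lambda_n)x + \lambda_n Sx$, so the scheme coincides with (\ref{Z}) and the hypotheses of Theorem \ref{11} are satisfied with $\alpha = (1-k)/2$; applying Theorem \ref{11} yields the desired conclusion.}
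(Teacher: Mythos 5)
Your proposal is correct and follows essentially the same route as the paper's own proof: both set $A=I-S$, use that $A$ is $\frac{1-k}{2}$-inverse strongly monotone so that $(0,1-k)=(0,2\alpha)$, identify $F(S)=VI(C,A)=\Omega$ and $(I-\lambda _{n}A)x=(1-\lambda _{n})x+\lambda _{n}Sx=P_{C}\left( x-\lambda _{n}Ax\right)$, and then invoke Theorem \ref{11}. If anything, your justification that $P_{C}$ acts as the identity here (the point $(1-\lambda _{n})x+\lambda _{n}Sx$ is a convex combination of elements of $C$, hence lies in $C$) is cleaner than the paper's wording, which attributes this to ``$A$ being a mapping from $C$ into itself.''
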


\begin{proof}
Let $A=I-S.$ Then, we know that $A$ is inverse strongly monotone mapping.
Also, It is clear that $F\left( S\right) =VI\left( C,A\right) .$ Since, $A$
is a mapping from $C$ into itself, we get%
\begin{equation*}
\left( I-\lambda _{n}\right) x_{n}+\lambda _{n}Sx_{n}=x_{n}-\lambda
_{n}\left( I-S\right) x_{n}=P_{C}\left( x_{n}-\lambda _{n}Ax_{n}\right) .
\end{equation*}%
So, from Theorem \ref{11}, we obtain the desired conclusion.
\end{proof}

\begin{theorem}
Let $H$ be a real Hilbert space$.$ Let $\alpha >0$, $A:H\rightarrow H$ be an 
$\alpha $-inverse strongly monotone mapping and $T:H\rightarrow H$ be a
nonexpansive mapping such that $F\left( T\right) \cap A^{-1}0\neq \emptyset
. $ For arbitrary initial value $x_{0}\in H,$ let $\left\{ x_{n}\right\} $
be a sequence defined by%
\begin{equation*}
\left\{ 
\begin{array}{l}
x_{n+1}=T\left( y_{n}-\lambda _{n}Ay_{n}\right) \\ 
y_{n}=\alpha _{n}x_{n}+\left( 1-\alpha _{n}\right) T\left( x_{n}-\lambda
_{n}Ax_{n}\right) ,\forall n\geq 0,%
\end{array}%
\right.
\end{equation*}%
where $\{\lambda _{n}\}\subset \lbrack a,b]$ for some $a,b\in (0,2\alpha )$
and $\left\{ \alpha _{n}\right\} \subset \left[ c,d\right] $ for some $%
c,d\in \left( 0,1\right) $. Then the sequence $\left\{ x_{n}\right\} $
converges weakly to a point $p\in VI\left( F\left( T\right) ,A\right) $
where $p=\lim_{n\rightarrow \infty }P_{F\left( T\right) \cap A^{-1}0}x_{n}.$
\end{theorem}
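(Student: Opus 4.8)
The plan is to obtain this theorem as a direct specialization of \thmref{11} to the case $C=H$, so the only real work is to check that the hypotheses and the iteration of \thmref{11} collapse onto the present setting.

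First I would take $C=H$ in \thmref{11}. Since $H$ is the whole space, the metric projection $P_{H}$ of $H$ onto itself is the identity map $I$, whence $P_{C}\left( I-\lambda _{n}A\right) =I-\lambda _{n}A$. Substituting this into (\ref{Z}) turns the recursion into $x_{n+1}=T\left( y_{n}-\lambda _{n}Ay_{n}\right) $ and $y_{n}=\alpha _{n}x_{n}+\left( 1-\alpha _{n}\right) T\left( x_{n}-\lambda _{n}Ax_{n}\right) $, which is exactly the scheme displayed in the statement. The parameter ranges $\{\lambda_{n}\}\subset[a,b]\subset(0,2\alpha)$ and $\{\alpha_{n}\}\subset[c,d]\subset(0,1)$ are identical to those required by \thmref{11}.

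Next I would identify the solution set of the variational inequality. When $C=H$, a point $x$ solves $VI\left( H,A\right) $ if and only if $\left\langle Ax,y-x\right\rangle \geq 0$ for every $y\in H$; choosing $y=x-Ax$ forces $\left\Vert Ax\right\Vert ^{2}\leq 0$, hence $Ax=0$, and conversely $Ax=0$ trivially satisfies the inequality. Therefore $\Omega =VI\left( H,A\right) =A^{-1}0$, so the set $F=F\left( T\right) \cap \Omega $ appearing in \thmref{11} coincides with $F\left( T\right) \cap A^{-1}0$, which is nonempty by hypothesis. With all hypotheses verified, \thmref{11} applies verbatim and produces a point $p\in F\left( T\right) \cap A^{-1}0$ with $x_{n}\rightharpoonup p$ and $p=\lim_{n\rightarrow \infty }P_{F\left( T\right) \cap A^{-1}0}x_{n}$, which is the second assertion.

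It remains only to upgrade the inclusion $p\in F\left( T\right) \cap A^{-1}0$ to the claimed membership $p\in VI\left( F\left( T\right) ,A\right) $. This is the single genuinely new (and elementary) step: since $Ap=0$, for every $y\in F\left( T\right) $ we have $\left\langle Ap,y-p\right\rangle =0\geq 0$, so $p$ solves the variational inequality over the constraint set $F\left( T\right) $, i.e.\ $p\in VI\left( F\left( T\right) ,A\right) $. I do not anticipate any serious obstacle; the only points demanding a moment's care are confirming $P_{H}=I$ and that the variational inequality over all of $H$ reduces to $A^{-1}0$, after which every convergence conclusion is inherited directly from \thmref{11}.
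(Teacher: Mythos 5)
Your proposal is correct and follows exactly the paper's own route: specialize \thmref{11} to $C=H$ (so $P_{H}=I$ and $\Omega=VI\left( H,A\right) =A^{-1}0$), then use the inclusion $F\left( T\right) \cap A^{-1}0\subset VI\left( F\left( T\right) ,A\right) $ to restate the conclusion. The only difference is that you spell out the two facts the paper labels as obvious (the identification $VI\left( H,A\right) =A^{-1}0$ via $y=x-Ax$, and the inclusion via $Ap=0$), which is a welcome addition but not a different argument.
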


\begin{proof}
We have $A^{-1}0=VI\left( H,A\right) $ and $P_{H}=I.$ Also, it is clear that 
$F\left( S\right) \cap A^{-1}0\subset VI\left( F\left( S\right) ,A\right) $.
So, by Theorem \ref{11}, we get the desired conclusion.
\end{proof}

\end{document}